\newtheorem{assumption}{Assumption}   
\newtheorem{lemma}{\bf{Lemma}}
\newtheorem{definition}{\bf{Definition}}
\newtheorem{theorem}{\bf{Theorem}}
\title{Hierarchical Robust Adaptive Control for Wind Turbines with Actuator Fault}
\author{Sina Ameli and Olugbenga Moses Anubi$^*$
    \affiliation{
	Department of Electrical Engineering\\
	Florida State University\\
	Tallahassee, Florida 32310\\
    Email: sa19bk@my.fsu.edu and oanubi@fsu.edu
    }	
}
\begin{document}

\maketitle    

\begin{abstract}
{\it This paper solves the problem of regulating the rotor speed tracking error for wind turbines in the full-load region by an effective robust-adaptive control strategy. The developed controller compensates for the uncertainty in the control input effectiveness caused by a pitch actuator fault, unmeasurable wind disturbance, and nonlinearity in the model. Wind turbines have multi-layer structures such that the high-level structure is nonlinearly coupled through an aggregation of the low-level control authorities. Hence, the control design is divided into two stages. First, an $\mathcal{L}_2$ controller is designed to attenuate the influence of wind disturbance fluctuations on the rotor speed. Then, in the low-level layer, a controller is designed using a proposed adaptation mechanism to compensate for actuator faults. The theoretical results show that the closed-loop equilibrium point of the regulated rotor speed tracking error dynamics in the high level is finite-gain $\mathcal{L}_2$ stable, and the closed-loop error dynamics in the low level is globally asymptotically stable. Simulation results show that the developed controller significantly reduces the root mean square of the rotor speed error compared to some well-known works, despite the largely fluctuating wind disturbance, and the time-varying uncertainty in the control input effectiveness.
}
\end{abstract}

\section{Introduction}\label{sec:introduction}
Wind energy is increasingly becoming a significant source of clean energy. However, its market penetration has been affected adversely by high costs related to manufacturing, operation, and maintenance, especially for megawatt size {\color{blue} wind turbines} {\color{blue}(}WTs{\color{blue})} producing a great deal of electricity. To expand the lifetime of such large WTs, deploying active load alleviation control is critical. One efficient approach to decrease structural loading, and thereby extend the lifetime of the WTs while maintaining the rated electrical power is regulating the rotor speed tracking error via controlling pitch angles~\cite{bianchi2006wind,palejiya2015performance,imran2014dac,kalbat2013linear}.
 In~\cite{palejiya2015performance} a PI controller is designed for a WT to alleviate the mechanical stress; however, it highly relies on gain selection based on the wind speed. In~\cite{imran2014dac} a linear quadratic regulator is designed to track the generator speed. However, the closed loop system performance is downgraded in wider operating range. In~\cite{kalbat2013linear}, the authors proposed a linear quadratic Gaussian control. However, the controller suffers from lack of robustness to model uncertainties.
 
Since WTs are aero-electro-mechanical systems, their dynamics are surpassingly complicated. There is no explicit relation between the rotor speed and the pitch angle for WTs in the region that the wind speed is above the rated value (full-load region). They have nonlinear dynamics exposed to stochastic wind signal changing the WTs' operating points. Thus, these issues necessitate the design of the nonlinear controllers for WTs. In~\cite{jafarnejadsani2013l1} a gain-scheduling optimal controller is proposed utilizing genetic algorithm for pitch control; however, it can only deal with a limited types of wind signals. In~\cite{sarkar2020nonlinear} a nonlinear model predictive control (MPC) is designed for a WT to reduce pitch actuation while reducing the mechanical loads and regulating the rotor speed tracking error.
However, the controller is not robust against model uncertainty.

Moreover, it has been proven that robust and adaptive control strategies tackle with the wind disturbance fluctuations, model uncertainty, and time-varying actuator faults in WTs~\cite{qi2014h,benlahrache2017fault,jiao2020ewse,ma2014moving,simani2012adaptive,simani2013data,wang2020adaptive,zuo2015l2}. In~\cite{qi2014h}, an $\mathcal{H}_{\infty}$-state-feedback controller is designed for a WT to mitigate actuator faults. However, the controller's performance is limited to specific operating points. In~\cite{jiao2020ewse} a disturbance observer and effective wind speed estimator are designed for a WT with model uncertainty to maintain the rated electrical power by controlling the pitch angle. However, the controller design relies on the estimation of the wind speed. In~\cite{benlahrache2017fault} a robust MPC is designed by solving an LMI for a WT while the pitch actuator is subjected to a fault. However, the controller's performance is dependent on the linearized model of the WT.  In~\cite{ma2014moving} a moving horizon $\mathcal{H}_{\infty}$ controller is designed for a WT to control the pitch angle. It shows that the closed-loop system achieves the $\mathcal{L}_2$ disturbance attenuation from the wind signal to the states in the framework of dissipation theory. However, the pitch actuators are assumed to be healthy during their operations. In~\cite{simani2012adaptive}, and~\cite{simani2013data} an online system identifier using adaptive directional forgetting scheme is designed to extract the WT model and approximate it to a second or third order system for pitch and torque channels. Then an adaptive PI controller is designed using modified Ziegler-Nichols rules. Although the controller can estimate the time-varying parameters, and regulates the rotor speed error, it is not robust against the wind signal with high turbulence intensity. In~\cite{wang2020adaptive} a robust adaptive controller is designed using clustering-type fuzzy neural network for a WT subjected to pitch actuator faults. However, the fuzzy controller itself has several parameters (defuzzification, inference, input and output membership functions, rulebases, etc) to be determined. In~\cite{zuo2015l2} an adaptive $\mathcal{L}_2$-gain controller is designed for a class of singular systems with Lipschitz nonlinearity, actuator saturation, and actuator faults. Although the results show that the closed-loop system achieves smaller upper bound on the $\mathcal{L}_2$-gain, the actuator faults are assumed to be constant.
The major contributions of this paper are (i) a multi-layered control structure consistent with the nature of the system, (ii) a detailed finite-gain analysis and design to compensate for unmeasurable wind disturbance, nonlinearity and model uncertainty in the high-level layer, and (iii) low-level adaptive control design to compensate for time-varying incipient pitch actuator faults. 
The remaining of the paper is organized as follows: Section~\ref{sec:prelim}, introduces notations and preliminary. Section~\ref{sec:problem}, presents the system model. Section~\ref{sec:control}, illustrates the control development for all layers. Section~\ref{sec:simulation}, shows the numerical simulation results. Section~\ref{sec:conclusion}, provides conclusion remarks, and finally the future work is discussed in section~\ref{sec:future work}.

\section{Notation and Preliminary}\label{sec:prelim}
The following notions and conventions are utilized in the paper:
$\mathbb{R}$, and $\mathbb{R}^n$ denote the space of real numbers, and real vectors of length $n$, respectively. $\mathbb{R}_+$ includes zero and positive real numbers.
Normal-face lower-case letters ($x\in\mathbb{R}$) are used to represent real scalars, bold-face lower-case letter ($\mathbf{x}\in\mathbb{R}^n$) represents vectors. The euclidean balls $\mathbb{B}_r(0)$ is defined for some $r>0$ as $\mathbb{B}_r(0)\triangleq\left\{\textbf{x}:\left\|\textbf{x}\right\|\le r\right\}$. Moreover, $\mathcal{L}_2$ is the space of all piecewise continuous, square-integrable functions $\mathbf f:\mathbb{R}_+\mapsto \mathbb{R}^n$, and the $\mathcal{L}_2$ norm of $\mathbf{f}\in\mathcal{L}_2$ is defined by 
\begin{align*}
    \left\|\textbf{f}\right\|_2\triangleq\left(\int_0^\infty{\|\textbf{f}(\tau)\|^2d\tau}\right)^{\frac{1}{2}}<\infty,
\end{align*}
and the extended space $\mathcal{L}_{2e}$ is the space of all measurable functions $\mathbf{f}$ such that $\mathbf{f}_T\in\mathcal{L}_2$ where
\begin{align*}
\mathbf{f}_T(t)\triangleq\left\{\begin{array}{rl}\mathbf{f}(t),&0\le t<T\\\textbf{0},&t\ge T\end{array}\right.
\end{align*}
, for all $T\in[0,\infty)$.
\begin{definition}($\gamma$-dissipativity)\cite{van2000l2}\label{Def:disp}
Consider the nonlinear system
\begin{align} \label{eqn:General}
   \mathcal{G}:\hspace{5mm} \begin{array}{rl}
    \dot{\mathbf{z}}&=g(\mathbf{z},\boldsymbol{\nu})\\
          \boldsymbol{\omega}&=h(\mathbf{z})
    \end{array}
\end{align}
where $\boldsymbol{\nu}(t)\in\mathcal{L}_{2e}^q$, $\mathbf{z}(t)\in \mathcal{L}_{2e}^n$, $\boldsymbol{\omega}(t)\in\mathcal{L}_{2e}^s$ are the input, the state variable, and the output, respectively. The nonlinear system in Eqn.~\eqref{eqn:General} is dissipative with respect to the supply rate $\ell:\mathcal{L}_{2e}^q\times\mathcal{L}_{2e}^s \mapsto\mathcal{L}_{2e}$, if an energy function $V(\mathbf{z})\geq 0$ exists such that, for all $t_1\geq t_0$, 
\begin{align}\label{eqn:dissipativity ineq}
    V(\mathbf{z}(t_1))\leq V(\mathbf{z}(t_0))+\int_{t_0}^{t_1} \ell(\boldsymbol{\nu},\mathbf{\omega}) d\tau\hspace{2mm}\text{for all}\hspace{2mm} \boldsymbol{\nu}\in\mathcal{L}_{2e}^q.
\end{align}
In addition, for a $\gamma>0$, if the supply rate is chosen as $\ell(\boldsymbol{\nu},\boldsymbol{\omega})=\gamma^2\left\|\boldsymbol{\nu}\right\|_2^2-\left\|\boldsymbol{\omega}\right\|_2^2$,
then \eqref{eqn:dissipativity ineq} indicates a finite-gain $\mathcal{L}_2$ stability. As a result, the dynamic is $\gamma$-dissipative and inequality \eqref{eqn:dissipativity ineq} yields $\dot{V}\leq\gamma^2\left\|\boldsymbol{\nu}\right\|_2^2-\left\|\boldsymbol{\omega}\right\|_2^2$.
\end{definition}
\begin{lemma} \label{lem:Lemma1}
The following characteristics are satisfied\cite{anubi2013variable}\\
\begin{enumerate}
    \item $\textsf{sat}(\chi\mathbf{y},y_1,y_2)=\chi\textsf{sat}(\mathbf{y},\frac{y_1}{\chi},\frac{y_2}{\chi}), \text{ for all } \chi>0$
    \item For $r>0$, there exists $\mu>0$ such that $\mathbf{y}^\top\textsf{sat}(\mathbf{y},y_1,y_2)\geq \mu \left\|\mathbf{y}\right\|_2^2, \text{ for all } \mathbf{y}\in \mathbb{B}_r(0)$
\end{enumerate}
\end{lemma}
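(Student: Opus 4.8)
The plan is to reduce both assertions to scalar statements about the componentwise action of the saturation map and then reassemble. Recall that $\textsf{sat}(\mathbf{y},y_1,y_2)$ is applied entrywise, its $i$-th component being $\max(y_1,\min(y_i,y_2))$, i.e. the projection of $y_i$ onto the interval $[y_1,y_2]$ with $y_1<0<y_2$. So it suffices to establish each property for a single real coordinate $y_i$ and then stack the components (for item~1) or sum them (for item~2).

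For the first identity I would invoke the positive homogeneity of $\min$ and $\max$: for any $\chi>0$ and reals $a,b$ one has $\min(\chi a,\chi b)=\chi\min(a,b)$ and $\max(\chi a,\chi b)=\chi\max(a,b)$. Writing $y_1=\chi(y_1/\chi)$, $y_2=\chi(y_2/\chi)$ and pulling $\chi$ successively out of the nested $\min$ and $\max$ gives $\max\!\big(y_1,\min(\chi y_i,y_2)\big)=\chi\,\max\!\big(y_1/\chi,\min(y_i,y_2/\chi)\big)$, which is precisely the $i$-th component of $\chi\,\textsf{sat}(\mathbf{y},y_1/\chi,y_2/\chi)$; collecting components over $i$ proves item~1. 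This step is essentially bookkeeping.

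The substantive part is item~2. Fix $\mathbf{y}\in\mathbb{B}_r(0)$, so that $|y_i|\le\left\|\mathbf{y}\right\|_2\le r$ for every $i$. I would split each term $y_i\,\textsf{sat}(y_i,y_1,y_2)$ into three cases. If $y_1\le y_i\le y_2$ the saturation is inactive and the term equals $y_i^2$. If $y_i>y_2$ the term is $y_2 y_i$, and since $0<y_i\le r$ we get $y_2 y_i=(y_2/y_i)\,y_i^2\ge (y_2/r)\,y_i^2$. Symmetrically, if $y_i<y_1$ the term is $y_1 y_i$ with $y_1,y_i$ both negative and $|y_i|\le r$, giving $y_1 y_i\ge (|y_1|/r)\,y_i^2$. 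Hence, with $\mu\triangleq\min\{1,\,y_2/r,\,|y_1|/r\}>0$, every term satisfies $y_i\,\textsf{sat}(y_i,y_1,y_2)\ge\mu\,y_i^2$, and summing over $i$ yields $\mathbf{y}^\top\textsf{sat}(\mathbf{y},y_1,y_2)\ge\mu\left\|\mathbf{y}\right\|_2^2$.

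The delicate points — the places to be careful rather than a genuine obstacle — are twofold. First, positivity of $\mu$ relies on $0$ lying strictly inside $[y_1,y_2]$; if either bound were $0$, the lower bound would fail for $\mathbf{y}$ aligned with that coordinate direction. Second, making the constant uniform over the ball is exactly where the hypothesis $\mathbf{y}\in\mathbb{B}_r(0)$ enters: it converts the coordinate-dependent ratios $y_2/y_i$ and $|y_1|/|y_i|$ into the $\mathbf{y}$-independent quantities $y_2/r$ and $|y_1|/r$. Since no global version of item~2 can hold, the restriction to $\mathbb{B}_r(0)$ is essential; everything else is routine algebra.
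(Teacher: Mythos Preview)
Your argument is correct. The paper itself does not prove this lemma; it merely states it and cites \cite{anubi2013variable}, so there is no in-paper proof to compare against. Your componentwise reduction, the homogeneity argument for item~1, and the three-case analysis producing $\mu=\min\{1,\,y_2/r,\,|y_1|/r\}$ for item~2 are the standard route and match how such results are typically established; your remarks that $0$ must lie strictly in $(y_1,y_2)$ and that the ball constraint is what makes $\mu$ uniform are exactly the right caveats, and both are consistent with the paper's usage, where the saturation limits are $-\theta_0<0$ and $\overline{\theta}-\theta_0>0$.
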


\section{Problem Formulation}\label{sec:problem}
In general WTs are operational in two areas, namely partial-load and full-load regions shown Figure~\ref{fig:WT regions}. In the former the optimum electrical power is generated via a torque control. However, in the latter, since the wind speed is above the rated value, the rated electrical power can be generated but it should not pass its rated value and meanwhile the mechanical stress should be reduced via the pitch control.
\subsection{Drive-train Dynamics}
\begin{figure}[t]
    \centering
    \includegraphics[width=8cm, height=4.5cm]{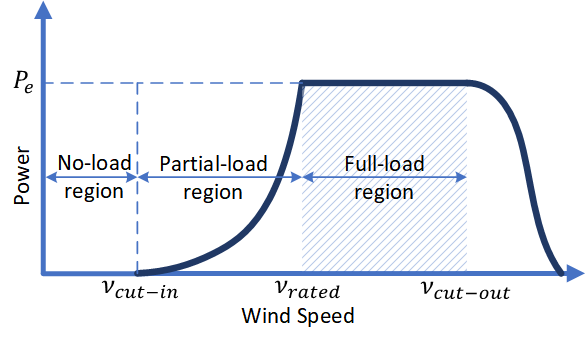}
    \caption{Electrical power generated by a typical WT at different regions}
    \label{fig:WT regions}
\end{figure}

The rotor dynamics in WTs is provided by \cite{jonkman2009definition,bianchi2006wind,wasynczuk1981dynamic}
 \begin{equation}\label{eqn:rotor_dyn}
     \dot{\omega}_r=f(\nu,\omega_r,\theta_j),\hspace{5mm} j=1,2,3
 \end{equation}
 ,where $\omega_r\in \mathcal{L}_{2e}$, is the rotor speed, $\theta_j\in\mathbb{R}_+$ is the $j$th pitch angle, $\nu\in\mathcal{L}_{2e}$ is the wind disturbance, and 
 \begin{align*}
     f(\nu,\omega_r,\theta_j)=g_1(\nu,\omega_r)-g_2(\nu,\omega_r)\left\|\boldsymbol{\theta}\right\|^2,
 \end{align*}
 where $\boldsymbol{\theta}\in\mathbb{R}^3$, and
 \begin{align}\label{eqn:g1}
     g_1(\nu,\omega_r)=&\frac{\kappa \nu^3}{J\omega_r}\left(\frac{\nu}{\omega_r}-p_1\right)\textsf{e}^{(-p_2\frac{\nu}{\omega_r})}-\frac{P_0}{J\omega_r},
     \end{align}
\begin{align}\label{eqn:g2}
       g_2(\nu,\omega_r)=&\frac{\kappa \nu^3}{3J\omega_r}p_3\textsf{e}^{(-p_2\frac{\nu}{\omega_r})}, 
\end{align}     
where $\kappa$ is a positive constant, and the positive constants $p_1$, $p_2$, and $p_3$ are obtained experimentally. $P_0$, and $J$ are the rated mechanical power, and the total drive-train inertia, respectively.
\subsection{Pitch Systems}
The pitch actuators include a hydraulic system whose dynamics are expressed as follows~\cite{odgaard2013wind}
  \begin{align} \label{equ:2nd order pitch Act}
    \Ddot{\theta}_j=-2\zeta\omega_n\dot{\theta}_j-\omega^2_n\theta_j+\omega_n^2\theta_{rj}, \hspace{1cm} j=1,2,3,
\end{align}
 where $\zeta$, is the damping ratio, and $\omega_{n}$ is the natural frequency. $\theta_j$, and $\theta_{rj}$ are the pitch angle output and the pitch reference (control input) for the $j$-th pitch actuator, respectively.
 \begin{assumption}
The pitch angle $\theta_j$ is mechanically constrained such that it is bounded as $0\leq\theta_j\leq\overline{\theta}$, where $\overline{\theta}$ is the maximum allowable pitch angel.
\end{assumption}
 \subsubsection{Fault Model}
One type of fault that could happen in pitch actuators is the high-air content fault, which is an incipient fault, making the oil is mixed with air. The occurrence of this fault changes the characteristics of the second-order dynamics in Eqn.~\eqref{equ:2nd order pitch Act}. Thus, the uncertainty caused by this fault is parametric modeled as follows
\begin{align}\label{equ:fault-model}
    \omega_{n}^2&=\delta\omega_{n_0}^2,\\\nonumber
    \zeta\omega_{n}&=\rho\zeta_0\omega_{n0},
\end{align} 
where $\omega_{n_0}$, and $\zeta_0$ are the operating points (faultless situation). Moreover, $\delta$ and $\rho$ are uncertain parameters because of the fault but they are bounded as $0<\delta\leq 1$, $0<\rho\leq 1$~\cite{odgaard2013wind}.
\section{Control Development}\label{sec:control}
In this section, the aim is to track the rotor speed operating point $\omega_{r0}$ robustly.
This is achieved via $\gamma$-dissipativity. Define the rotor speed tracking error
\begin{align}\label{equ:rotor error}
    \widetilde{\omega}_r=\omega_r-\omega_{r0}.
\end{align} Denote the following filtered error
\begin{align}\label{equ:s-surface1}
   \sigma=&\widetilde{\omega}_r+\psi\underbrace{\int_0^t\widetilde{\omega}_r d\tau}_{\text{$\widetilde{\omega}_{rI}$}},
\end{align}
where $\psi>0$.
Taking first time derivative of Eqn.~\eqref{equ:s-surface1}, and then inserting the time derivative of Eqn.~\eqref{equ:rotor error} into it yields
\begin{align}\nonumber
 \dot{\sigma}=&\dot{\widetilde{\omega}}_r+\psi\dot{\widetilde{\omega}}_{rI}\\
    =&f(\nu,\omega_r,\theta_j)+\psi\widetilde{\omega}_r\label{equ:High_Op}.
\end{align}
Consider the equilibrium points $\omega_{r0}$, $\nu_0$, and $\theta_0$ in Eqn.~\eqref{eqn:rotor_dyn}, then  $f(\nu_0,\omega_{r0},\theta_0)=0$. Subtracting this equation from Eqn.~\eqref{equ:High_Op} yields
\begin{align*}
    \dot{\sigma}=f(\nu,\omega_r,\theta_j)-f(\nu_0,\omega_{r0},\theta_0)+\psi\widetilde{\omega}_r.
\end{align*}
Then, invoking the mean value theorem~\cite{rudin1964principles} results in
\begin{align}\label{equ:Ol}
\dot{\sigma}=\rho_{\nu}\widetilde{\nu}+\left(\rho_{\omega}+\psi\right)\widetilde{\omega}_r+\boldsymbol{\rho}_{\theta}^\top\boldsymbol{\widetilde{\theta}}
\end{align}
where
\begin{align*}
  \rho_{\nu}=&\frac{\partial f(\eta_{\nu},\eta_{\omega},\boldsymbol{\eta}_{\theta})}{\partial \nu},\\
  \rho_{\omega}=&\frac{\partial f(\eta_{\nu},\eta_{\omega},\boldsymbol{\eta}_{\theta})}{\partial \omega},\\
  \boldsymbol{\rho}_{\theta}=&\nabla_{\theta}f(\eta_{\nu},\eta_{\omega},\boldsymbol{\eta}_{\theta}),
\end{align*}
with $\eta_{\nu}=\lambda\nu_0+(1-\lambda)\nu$, $\eta_{\omega}=\lambda\omega_{r0}+(1-\lambda)\omega_r$, $\boldsymbol{\eta}_{\theta}=\lambda\boldsymbol{\theta}_0+(1-\lambda)\boldsymbol{\theta}$ for some $\lambda\in(0,1)$, and
\begin{align*}
    \widetilde{\boldsymbol{\theta}}&=\boldsymbol{\theta}-\boldsymbol{\theta}_0,\\
    \widetilde{\nu}&=\nu-\nu_0
\end{align*}
, where $\boldsymbol{\theta}_0$, and $\nu_0$ are the pitch angle vector, and the wind speed at the operating point. These operating points are known, and they are usually used in the component design and rating. So, it is reasonable to assume that they are known. Moreover, the simulation results in section~\ref{sec:simulation}, will show that the controller regulates the rotor speed tracking error well at different operating points.

Here, the objective is to design an auxiliary control law $\widetilde{\boldsymbol{\theta}}$ for Eqn.~\eqref{equ:Ol}, such that the rotor speed tracking error $\widetilde{\omega}_r$ is robustly regulated for all $\widetilde{\nu}\in\mathcal{L}_{2e}$. This is then deployed for a desired pitch angle for the low-level dynamics. The subsequent properties hold for the open-loop dynamics in Eqn.~\eqref{equ:Ol}.
\begin{assumption}\label{as:bounded uncertainties}
The high-level dynamics is sufficiently smooth. Hence, the uncertain parameters $\rho_{\nu}$, and $\rho_{\omega}$ in Eqn.~\eqref{equ:Ol} are bounded as follows
\begin{align*}
    \left|\rho_{\nu}\right|\leq\bar{\rho}_{\nu},\\
        \left|\rho_{\omega}\right|\leq\bar{\rho}_{\omega},
\end{align*}
where $\bar{\rho}_{\nu}$, and $\bar{\rho}_{\omega}$ are positive constants. Moreover, there exists $\boldsymbol{\rho}_0\in\mathbb{R}^3$, and a positive constant $\varphi$ such that
\begin{align}\label{equ:conic}
    \boldsymbol{\rho}_{\theta}^\top\boldsymbol{\rho}_0\geq \varphi.
\end{align}
\end{assumption}
Consequently, the design of the auxiliary control law is as follows
\begin{align}\label{equ:High-level control law}
    \widetilde{\boldsymbol{\theta}} &= -\boldsymbol{\rho}_0\textsf{sat}\left(k\sigma,-\theta_0,\overline{\theta}-\theta_0\right),
\end{align}
where $k$ is a positive control gain, and $\boldsymbol{\rho}_0$ satisfies the conic constraint in Eqn.~\eqref{equ:conic}. Then, the corresponding high-level closed-loop error system is obtained as follows
\begin{align}\nonumber
    \dot{\sigma}=&\left(\rho_{\omega}+\psi\right)\widetilde{\omega}_r+\rho_{\nu}\widetilde{\nu}-\boldsymbol{\rho}_{\theta}^\top\boldsymbol{\rho}_0\textsf{sat}\left(k\sigma,-\theta_0,\overline{\theta}-\theta_0\right)\\\label{equ:cle}
    =&\left(\rho_{\omega}+\psi\right)\left(\sigma-\psi\widetilde{\omega}_{rI}\right)+\rho_{\nu}\widetilde{\nu}\\\nonumber
    &-\boldsymbol{\rho}_{\theta}^\top\boldsymbol{\rho}_0\textsf{sat}\left(k\sigma,-\theta_0,\overline{\theta}-\theta_0\right)
    \end{align}
    The following theorem provides the robust performance of the auxiliary control law in the high-level layer.
\begin{theorem}
Given the high-level control law in Eqn.~\eqref{equ:High-level control law}, and $\gamma>0$, if the control gain is chosen such that the following sufficient condition is satisfied
\begin{align}\label{equ:SC}
    k\geq \frac{1}{\mu\varphi}\left(1+\frac{\left(\bar{\rho}_\omega+2\psi\right)^2}{4\psi}+\frac{\bar{\rho}_{\nu}^2}{4\gamma^2}\right),
\end{align}
then the corresponding closed-loop error dynamics in~\eqref{equ:cle} is $\mathcal{L}_2$-gain stable and the $\mathcal{L}_2$-gain from the exogenous signal $\widetilde{\nu}$ to the regulated error $\widetilde{\omega}_r$ is upper bounded by $\gamma$.
\end{theorem}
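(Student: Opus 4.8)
The plan is to exhibit a quadratic storage function $V\ge 0$ for which the dissipation inequality $\dot V\le \gamma^2\widetilde\nu^2-\widetilde\omega_r^2$ holds along \eqref{equ:cle}; by Definition~\ref{Def:disp} this is precisely $\gamma$-dissipativity with supply rate $\gamma^2\|\widetilde\nu\|_2^2-\|\widetilde\omega_r\|_2^2$, hence finite-gain $\mathcal{L}_2$ stability with gain at most $\gamma$ from the disturbance $\widetilde\nu$ to the regulated error $\widetilde\omega_r$.

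First I would take $V=\tfrac12\sigma^2+\tfrac{c}{2}\widetilde\omega_{rI}^2$ for a constant $c>0$ fixed later; this is positive definite, and since $\widetilde\omega_{rI}(0)=0$ one has $V(0)=\tfrac12\sigma(0)^2<\infty$. Differentiating along \eqref{equ:cle}, and using $\dot{\widetilde\omega}_{rI}=\widetilde\omega_r=\sigma-\psi\widetilde\omega_{rI}$, gives
\[
\dot V=(\rho_\omega+\psi)\sigma^2-(\rho_\omega+\psi)\psi\,\sigma\widetilde\omega_{rI}+\rho_\nu\,\sigma\widetilde\nu-\boldsymbol\rho_\theta^\top\boldsymbol\rho_0\,\sigma\,\textsf{sat}\!\left(k\sigma,-\theta_0,\overline\theta-\theta_0\right)+c\,\sigma\widetilde\omega_{rI}-c\psi\,\widetilde\omega_{rI}^2 .
\]
The crucial negative term comes from the saturation: because the interval $[-\theta_0,\overline\theta-\theta_0]$ contains the origin, $\textsf{sat}(k\sigma,\cdot,\cdot)$ has the same sign as $\sigma$, and applying Lemma~\ref{lem:Lemma1} (item~1 to factor out $k$, then item~2 on a ball $\mathbb{B}_r(0)$) yields $\sigma\,\textsf{sat}(k\sigma,-\theta_0,\overline\theta-\theta_0)\ge\mu k\,\sigma^2$; combined with the conic bound \eqref{equ:conic} this gives $-\boldsymbol\rho_\theta^\top\boldsymbol\rho_0\,\sigma\,\textsf{sat}(\cdot)\le-\mu\varphi k\,\sigma^2$. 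Inserting this, together with $|\rho_\omega|\le\bar\rho_\omega$ and $|\rho_\nu|\le\bar\rho_\nu$ from Assumption~\ref{as:bounded uncertainties}, leaves an upper bound for $\dot V$ that is a quadratic form in $(\sigma,\widetilde\omega_{rI},\widetilde\nu)$ with a leading $-\mu\varphi k\,\sigma^2$.

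Next I would add $\widetilde\omega_r^2-\gamma^2\widetilde\nu^2$, expand $\widetilde\omega_r^2=\sigma^2-2\psi\sigma\widetilde\omega_{rI}+\psi^2\widetilde\omega_{rI}^2$, pick $c$ so that the $\widetilde\omega_{rI}^2$ terms combine into a usable negative coefficient, and then dominate the two remaining cross terms with Young's inequality $ab\le\epsilon a^2+\tfrac{1}{4\epsilon}b^2$: applied to $\rho_\nu\,\sigma\widetilde\nu$ with weight $\gamma^2$ it exactly absorbs $\gamma^2\widetilde\nu^2$ and contributes $\tfrac{\bar\rho_\nu^2}{4\gamma^2}\sigma^2$, while applied to the residual $\sigma\widetilde\omega_{rI}$ term with weight matched to the available $-\widetilde\omega_{rI}^2$ coefficient it contributes $\tfrac{(\bar\rho_\omega+2\psi)^2}{4\psi}\sigma^2$ (this is the step that pins down $c$ and where the combination $\bar\rho_\omega+2\psi$ appears). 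What remains is $\dot V+\widetilde\omega_r^2-\gamma^2\widetilde\nu^2\le\big(1+\tfrac{(\bar\rho_\omega+2\psi)^2}{4\psi}+\tfrac{\bar\rho_\nu^2}{4\gamma^2}-\mu\varphi k\big)\sigma^2$, and the sufficient condition \eqref{equ:SC} makes this coefficient nonpositive, so $\dot V\le\gamma^2\widetilde\nu^2-\widetilde\omega_r^2$. Integrating from $0$ to $T$ and using $V\ge0$ gives $\|\widetilde\omega_{r,T}\|_2^2\le\gamma^2\|\widetilde\nu_T\|_2^2+V(0)$ for all $T$, which is the asserted finite-gain $\mathcal{L}_2$ stability with $\mathcal{L}_2$-gain upper bounded by $\gamma$.

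The step I expect to be the main obstacle is the treatment of the integral state $\widetilde\omega_{rI}$: the mean-value coefficient $\rho_\omega$ is time-varying and only bounded (not sign-definite), so the cross term $(\rho_\omega+\psi)\psi\,\sigma\widetilde\omega_{rI}$ cannot be cancelled, and $c$ together with the Young split must be chosen so the leftover is absorbed uniformly over $\rho_\omega\in[-\bar\rho_\omega,\bar\rho_\omega]$ without introducing any unabsorbed $\widetilde\omega_{rI}^2$ — this is what forces the precise form $(\bar\rho_\omega+2\psi)^2/(4\psi)$ in \eqref{equ:SC}. A secondary care point is that the bound $\sigma\,\textsf{sat}(k\sigma,\cdot,\cdot)\ge\mu k\sigma^2$ is a local ($\mathbb{B}_r(0)$) estimate from Lemma~\ref{lem:Lemma1}, so one must invoke the mechanical pitch constraint that the saturated control law is designed to respect in order to guarantee this estimate is in force along the closed-loop trajectories.
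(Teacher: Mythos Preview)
Your approach is essentially the paper's: a quadratic storage $V=\tfrac12\sigma^2+\tfrac{c}{2}\widetilde\omega_{rI}^2$, completion of squares/Young for the cross terms, and Lemma~\ref{lem:Lemma1} together with the conic bound \eqref{equ:conic} to turn the saturation term into $-\mu\varphi k\,\sigma^2$. The one substantive difference is the choice of dissipated output. The paper fixes $c=\psi^2$ and proves $\dot V\le\gamma^2\widetilde\nu^2-\sigma^2$; with that specific $c$ the cross term $\psi^2\widetilde\omega_{rI}\dot{\widetilde\omega}_{rI}$ cancels the $-\psi^2\sigma\widetilde\omega_{rI}$ piece exactly, leaving only $-\psi\rho_\omega\sigma\widetilde\omega_{rI}$, and one perfect-square completion then yields $\rho_\omega+\psi+\tfrac{\rho_\omega^2}{4\psi}=\tfrac{(\rho_\omega+2\psi)^2}{4\psi}$ on the nose. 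The paper then passes to $\widetilde\omega_r$ by the one-line transfer-function bound $\|\widetilde\omega_r\|_2\le\|\sigma\|_2$ (the inverse filter $s/(s+\psi)$ has $H_\infty$ gain $1$). Your route dissipates $\widetilde\omega_r^2$ directly, which avoids that transfer-function step but costs you the extra $\psi^2\widetilde\omega_{rI}^2-2\psi\sigma\widetilde\omega_{rI}$ from expanding $\widetilde\omega_r^2$; the Young split no longer produces $\tfrac{(\bar\rho_\omega+2\psi)^2}{4\psi}$ \emph{exactly} but only after a further uniform bound over $\rho_\omega\in[-\bar\rho_\omega,\bar\rho_\omega]$ (e.g.\ $c=\psi^2+2\psi$ works and the resulting coefficient is dominated by the one in \eqref{equ:SC}). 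So your claim that the cross-term Young ``contributes $\tfrac{(\bar\rho_\omega+2\psi)^2}{4\psi}\sigma^2$'' is slightly optimistic as stated---it is an upper bound, not an identity---but the argument goes through and delivers the same sufficient condition. Your closing caveats about the time-varying $\rho_\omega$ and the local nature of the $\mu$-estimate are on point and match the paper's implicit assumptions.
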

\begin{proof}
Denote the positive definite energy function
\begin{align}\label{equ:lyp_H}
    V=\frac{1}{2}\sigma^2+\frac{\psi^2}{2}\widetilde{\omega}_{rI}^2.
\end{align}
Since $\left\|\widetilde{\omega}_r\right\|_2\leq \left\|\left(1+\frac{1}{s}\widetilde{\omega}_r\right)\right\|_2=\left\|\sigma\right\|_2$, and considering Definition.~\eqref{Def:disp}, it is sufficient to show that $\dot{V}\leq\gamma^2\widetilde{\nu}^2-\sigma^2$.
Taking the time derivative of $V$, and inserting the high-level closed-loop dynamics in Eqn.~\eqref{equ:cle} into it, leads to
\begin{align*}
      \dot{V}&=\sigma\dot{\sigma}+\psi^2\widetilde{\omega}_{rI}\dot{\widetilde{\omega}}_{rI}\\
      &=\sigma\left(\left(\rho_{\omega}+\psi\right)\sigma-\psi\left(\rho_{\omega}+\psi\right)\widetilde{\omega}_{rI}+\rho_{\nu}\widetilde{\nu}\right)\\
      &-\frac{1}{k}\boldsymbol{\rho}_{\theta}^\top\boldsymbol{\rho}_0(k\sigma)\textsf{sat}\left(k\sigma,-\theta_0,\overline{\theta}-\theta_0\right)+\psi^2\widetilde{\omega}_{rI}\left(\sigma-\psi\widetilde{\omega}_{rI}\right)
\end{align*}
Adding, and subtracting $\left(\gamma^2\widetilde{\nu}^2-\sigma^2\right)$, and finally applying Assumption.~\eqref{as:bounded uncertainties}, and Lemma.~\eqref{lem:Lemma1} yield
\begin{align*}
    \dot{V}&=\left(\rho_{\omega}+\psi+1\right)\sigma^2-\psi\rho_{\omega}\sigma\widetilde{\omega}_{rI}+\rho_{\nu}\sigma\widetilde{\nu}\\
    &-\frac{1}{k}\boldsymbol{\rho}_{\theta}^\top\boldsymbol{\rho}_0(k\sigma)\textsf{sat}\left(k\sigma,-\theta_0,\overline{\theta}-\theta_0\right)\\
    &-\psi^3\widetilde{\omega}_{rI}^2-\gamma^2\widetilde{\nu}^2+\left(\gamma^2\widetilde{\nu}^2-\sigma^2\right)\\
    &=\left(\rho_{\omega}+\psi+1+\frac{\rho_{\omega}^2}{4\psi}+\frac{\rho_{\nu}^2}{4\gamma^2}\right)\sigma^2-\psi\left(\psi\widetilde{\omega}_{rI}+\frac{\rho_{\omega}}{2\psi}\sigma\right)^2\\
    &-\gamma^2\left(\widetilde{\nu}-\frac{\rho_{\nu}}{2\gamma^2}\sigma\right)^2-\frac{1}{k}\boldsymbol{\rho}_{\theta}^\top\boldsymbol{\rho}_0(k\sigma)\textsf{sat}\left(k\sigma,-\theta_0,\overline{\theta}-\theta_0\right)\\
    &+\left(\gamma^2\widetilde{\nu}^2-\sigma^2\right)\\
    &\leq-\left(k\mu\varphi-1-\frac{\left(\bar{\rho}_\omega+2\psi\right)^2}{4\psi}-\frac{\bar{\rho}_{\nu}^2}{4\gamma^2}\right)\sigma^2+\left(\gamma^2\widetilde{\nu}^2-\sigma^2\right)\\
    &\leq\left(\gamma^2\widetilde{\nu}^2-\sigma^2\right)
\end{align*}
\end{proof}

Next, the desired pitch angle vector $\boldsymbol{\theta}$ designed in the high-level layer is directly translated to individual desired pitch angle in the low-level control such that $\boldsymbol{\theta}_d=\boldsymbol{\theta}$. Without loss of generality, the control development in the low-level is performed using a single actuator since the actuator model provided in Eqn.~\eqref{equ:2nd order pitch Act} is the same for all actuators.

Consider the low-level tracking error $\varepsilon=\theta-\theta_d$, where $\theta_d$ is the desired pitch angle for one actuator. Since the pitch actuator dynamics in Eqn.~\eqref{equ:2nd order pitch Act} is significantly faster than the rotor dynamics given in Eqn.~\eqref{eqn:rotor_dyn}, then it is reasonable to assume that $\dot{\theta}_d=0$ for developing the controller for Eqn.~\eqref{equ:2nd order pitch Act}. See \cite{anubi2014new,anubi2013roll} for more detailed analysis where the authors use singular perturbation and time-scale separation to show that it is indeed the case for mechatronic systems of this form.
Next, consider the filtered tracking error
\begin{align}\label{equ:eta}
    z\triangleq\dot{\varepsilon}+2\zeta_0\omega_{n0}\varepsilon.
\end{align}
Taking first derivative of Eqn.~\eqref{equ:eta} yields
\begin{align*}
        \dot{z}=&\Ddot{\varepsilon}+2\zeta_0\omega_{n0}\dot{\varepsilon}\\\nonumber
    =&\Ddot{\theta}+2\zeta_0\omega_{n0}\dot{\theta},
\end{align*}
then inserting the pitch actuator dynamics in Eqn.~\eqref{equ:2nd order pitch Act}, and the fault model in Eqn.~\eqref{equ:fault-model} yields 
\begin{align}\nonumber
            \dot{z} &=\delta\omega_{n0}^2(\theta_r-\theta)-\delta\omega_{n0}^2\left(\frac{2\left(\rho\zeta_0\omega_{n0}-\zeta_0\omega_{n0}\right)}{\delta\omega_{n0}^2}\right)\dot{\theta}\\\label{equ: eta_dotwu}
       &=\delta\omega_{n0}^2\left(\theta_r-\theta-\eta\dot{\theta}\right),
\end{align}
where
\begin{align*}
    \eta\triangleq\left(\frac{2\left(\rho\zeta_0\omega_{n0}-\zeta_0\omega_{n0}\right)}{\delta\omega_{n0}^2}\right)
\end{align*}
is an uncertain parameter due to actuator faults.
Consequently, the low-level control law is designed as
\begin{align}\label{equ:LLCL}
    \theta_r=\theta-k_\theta z+\widehat{\eta}\dot{\theta},
\end{align}
where $k_\theta>0$ is the control gain, and $\widehat{\eta}$ is the parameter estimation. Next, inserting the control law in Eqn.~\eqref{equ:LLCL} into the open-loop dynamics in Eqn.~\eqref{equ: eta_dotwu} yields 
\begin{align}\label{equ:CLL}
\dot{z}=-\delta\omega_{n0}^2k_\theta z+\delta\omega_{n0}^2\widetilde{\eta}\dot{\theta},
\end{align}
where $\widetilde{\eta}=\widehat{\eta}-\eta$ is the parameter estimation error. Consequently, the adaptive law is designed as
\begin{align}\label{equ:adapt law}
    \dot{\widehat{\eta}}=-\alpha z\dot{\theta},
\end{align}
where $\alpha>0$ is an adaptation gain.
\begin{theorem}\label{Thm: theorem2}
Consider the low-level control law in Eqn.~\eqref{equ:LLCL}, together with the adaptive law in Eqn.~\eqref{equ:adapt law}, if the following conditions hold
\begin{align}\label{equ:SC2}
    k_\theta>0,\hspace{2mm}\alpha>0,
\end{align}
then the closed-loop error system in Eqn.~\eqref{equ:CLL} is globally asymptotically stable. 
\end{theorem}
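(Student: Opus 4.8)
The plan is to prove Theorem~\ref{Thm: theorem2} by a composite-energy (Lyapunov) argument that pairs the filtered tracking error $z$ with the parameter estimation error $\widetilde{\eta}$, exploiting that the ``adaptation gain'' $\delta\omega_{n0}^2$ in Eqn.~\eqref{equ:CLL} is an unknown but strictly positive constant (since $0<\delta\le 1$, $\omega_{n0}>0$). Because the high-air-content fault is incipient, $\eta$ is treated as (piecewise) constant, so $\dot{\widetilde{\eta}}=\dot{\widehat{\eta}}$. I would take the candidate
\begin{align*}
W=\tfrac12 z^2+\frac{\delta\omega_{n0}^2}{2\alpha}\,\widetilde{\eta}^2,
\end{align*}
which is positive definite and radially unbounded in $(z,\widetilde{\eta})$.

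First I would differentiate $W$ along the closed-loop dynamics~\eqref{equ:CLL} and the adaptive law~\eqref{equ:adapt law}:
\begin{align*}
\dot{W}&=z\dot{z}+\frac{\delta\omega_{n0}^2}{\alpha}\,\widetilde{\eta}\,\dot{\widehat{\eta}}\\
&=-\delta\omega_{n0}^2 k_\theta z^2+\delta\omega_{n0}^2\,\widetilde{\eta}\,z\,\dot{\theta}-\delta\omega_{n0}^2\,\widetilde{\eta}\,z\,\dot{\theta}\\
&=-\delta\omega_{n0}^2 k_\theta z^2\le 0,
\end{align*}
so the indefinite cross terms cancel by the very construction of Eqn.~\eqref{equ:adapt law}. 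This already yields Lyapunov stability of the origin $(z,\widetilde{\eta})=(0,0)$, shows $z,\widetilde{\eta}\in\mathcal{L}_\infty$, and, integrating the bound on $\dot W$, gives $z\in\mathcal{L}_2$.

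The remaining---and only delicate---step is upgrading this to asymptotic convergence $z\to 0$, because $\dot W$ is merely negative \emph{semi}-definite (there is no damping in the $\widetilde{\eta}$ direction). I would invoke Barbalat's lemma: with $z\in\mathcal{L}_2\cap\mathcal{L}_\infty$ in hand, it suffices to show $\dot z$ is bounded, i.e. that $\dot\theta$ is bounded. Using $\varepsilon=\theta-\theta_d$ with $\dot\theta_d\approx 0$ and $z=\dot\varepsilon+2\zeta_0\omega_{n0}\varepsilon$, boundedness of $z$ together with $\varepsilon=(s+2\zeta_0\omega_{n0})^{-1}z\in\mathcal{L}_\infty$ gives $\dot\varepsilon=\dot\theta\in\mathcal{L}_\infty$; hence $\dot z\in\mathcal{L}_\infty$ from Eqn.~\eqref{equ:CLL}, and Barbalat's lemma delivers $z(t)\to 0$. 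Since $z\to 0$ drives $\varepsilon\to 0$ through the same stable first-order filter, the pitch tracking error vanishes from any initial condition, which is the asserted global asymptotic stability of Eqn.~\eqref{equ:CLL}.

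The hard part is precisely this last step. The adaptive law alone does not force $\widetilde{\eta}\to 0$ (that would require a persistence-of-excitation condition on $\dot\theta$), so the conclusion must be stated as global convergence of the tracking error $z$ (equivalently $\varepsilon$) with $\widetilde{\eta}$ only guaranteed bounded; equivalently one may cite LaSalle's invariance principle on the (then effectively autonomous) error dynamics. Care is also needed so that neither the unknown magnitude of $\delta$ nor the sign of $\dot\theta$ ever enters the stability conclusion---this is exactly why the $\delta\omega_{n0}^2$ weighting in $W$ is essential and why the adaptation gain is simply $\alpha>0$ rather than anything $\delta$-dependent.
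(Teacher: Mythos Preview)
Your proposal is correct and follows essentially the same approach as the paper: the identical composite Lyapunov function $V=\tfrac12 z^2+\tfrac{\delta\omega_{n0}^2}{2\alpha}\widetilde{\eta}^2$, the cancellation yielding $\dot V=-\delta\omega_{n0}^2 k_\theta z^2$, and then Barbalat's lemma after establishing $z\in\mathcal{L}_2\cap\mathcal{L}_\infty$ and $\dot z\in\mathcal{L}_\infty$. Your additional remark that only $z\to 0$ (not $\widetilde{\eta}\to 0$) is actually guaranteed is a fair and slightly more precise reading of what the argument delivers.
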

\begin{proof}
Denote the positive definite radially unbounded candidate Lyapunov function
\begin{align*}
    V=\frac{1}{2}z^2+\frac{\delta\omega_{n0}^2}{2\alpha}\widetilde{\eta}^2,
\end{align*}
taking first derivative yields
\begin{align*}
    \dot{V}=z\dot{z}+\frac{\delta\omega_{n0}^2}{\alpha}\widetilde{\eta}\dot{\widehat{\eta}},
\end{align*}
Inserting the closed-loop dynamics in Eqn.~\eqref{equ:CLL} results in
\begin{align}\label{equ:bt}
    \dot{V}=-\delta\omega_{n0}^2 k_\theta z^2+\delta\omega_{n0}^2z\widetilde{\eta}\dot{\theta}+\frac{\delta\omega_{n0}^2}{\alpha}\widetilde{\eta}\dot{\widehat{\eta}}.
\end{align}
Next, inserting the adaptation law in Eqn.~\eqref{equ:adapt law} yields
\begin{align}\label{equ:bt}
    \dot{V}&=-\delta\omega_{n0}^2k_\theta z^2.
\end{align}
 This indicates that $\dot{V}$ is negative semidefinite implying that $V\in \mathcal{L}_{\infty}$; which implies that $z\in \mathcal{L}_{\infty}$, and $\widetilde{\eta}\in \mathcal{L}_{\infty}$. Thus, it implies that $\varepsilon$, and $\dot{\varepsilon}$ ($\dot{\theta}$) are also bounded. Now, consider Eqn.~\eqref{equ:CLL}, as $z$, $\widetilde{\eta}$, and $\dot{\theta}$, are bounded, then $\dot{z}$ is also bounded implying that $z$ is uniformly continuous. Next, taking the integral of both sides in Eqn.~\eqref{equ:bt} yields
\begin{align*}
    \int_{0}^{\infty} \dot{V} d\tau&=\int_{0}^{\infty}-\delta\omega_{n0}^2k_{\theta}z^2 d\tau,
\end{align*}
\begin{align}\label{equ:L2_proof}
    V(\infty)-V(0)&=-\int_{0}^{\infty}\delta\omega_{n0}^2k_{\theta}
    z^2 d\tau,
\end{align}
Since $V$ is bounded, then $z\in \mathcal{L}_2$. Since $z\in \mathcal{L}_2\cap \mathcal{L}_{\infty}$, and also it is uniformly continuous, according to the Barbalat's lemma\cite{khalil2002nonlinear}, $z$ converges to zero asymptotically. Thus, $\varepsilon\rightarrow 0$, and $\dot{\theta}\rightarrow 0$ as $t\rightarrow\infty$.
\end{proof}

\section{Numerical Results}\label{sec:simulation}
The developed controller is validated on a 5MW variable pitch WT model via \textcolor{blue}{the} Fatigue Aerodynamics Structures and Turbulence (FAST) simulator developed by the US National Renewable Energy {\color{blue} Lab} (NREL) \cite{jonkman2009definition}.
\begin{figure*}[t]
    \centering
    \includegraphics[width=17cm,height=8.5cm]{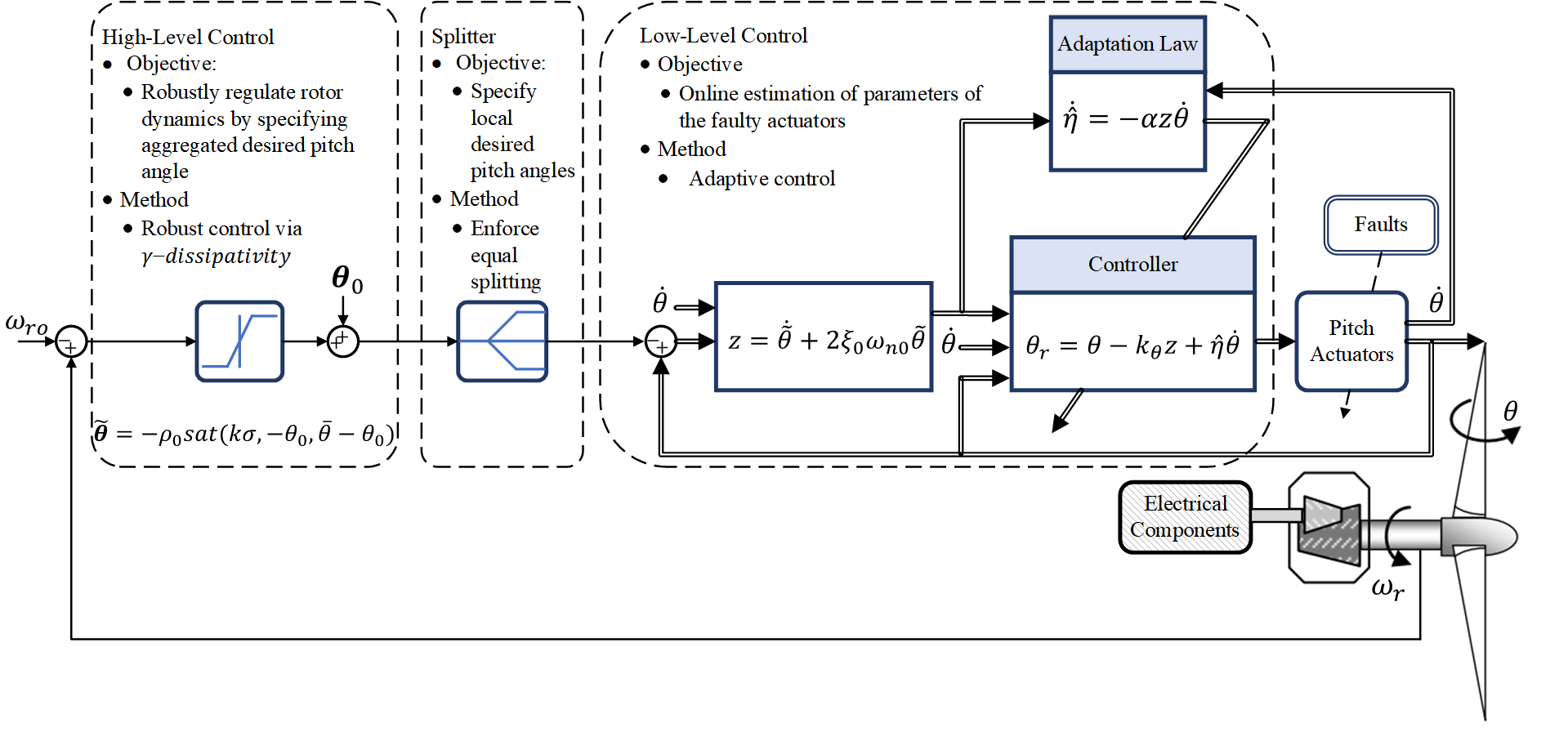}
    \caption{Block diagram of the developed controller}
    \label{fig: Block diagram}
\end{figure*}
\begin{figure}[t]
    \centering
    \includegraphics[width=8.5cm,height=3cm]{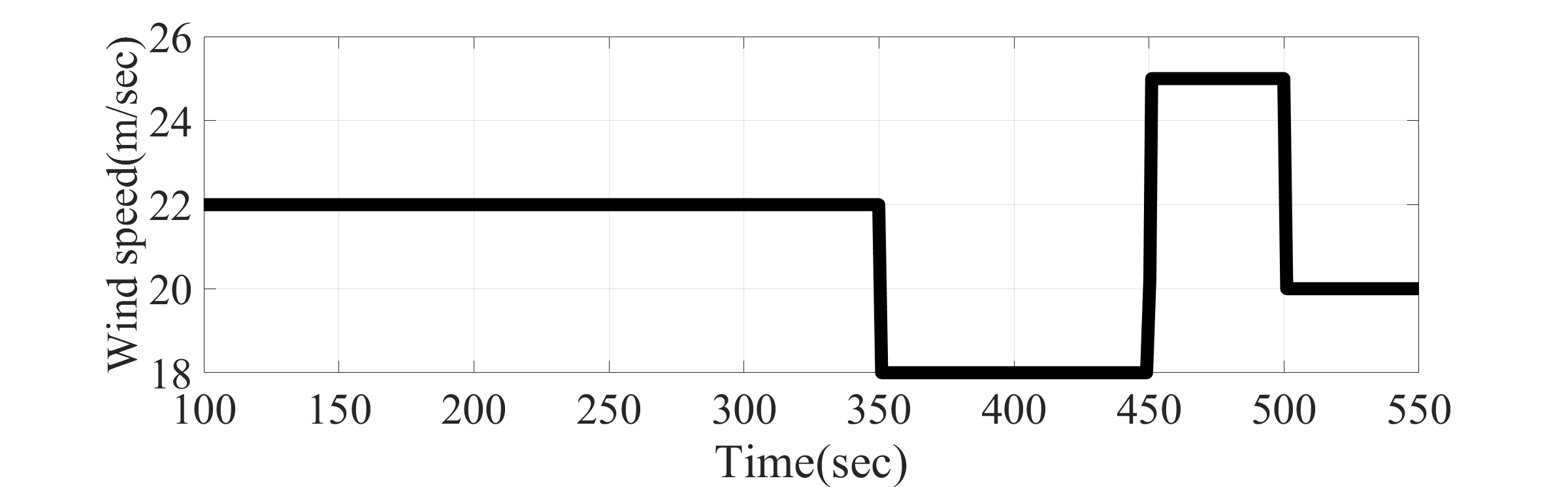}
    \caption{Step wind signal}
    \label{fig:Wind_stp}
\end{figure}
  \begin{figure}[t]
    \centering
    \includegraphics[width=8.5cm,height=4cm]{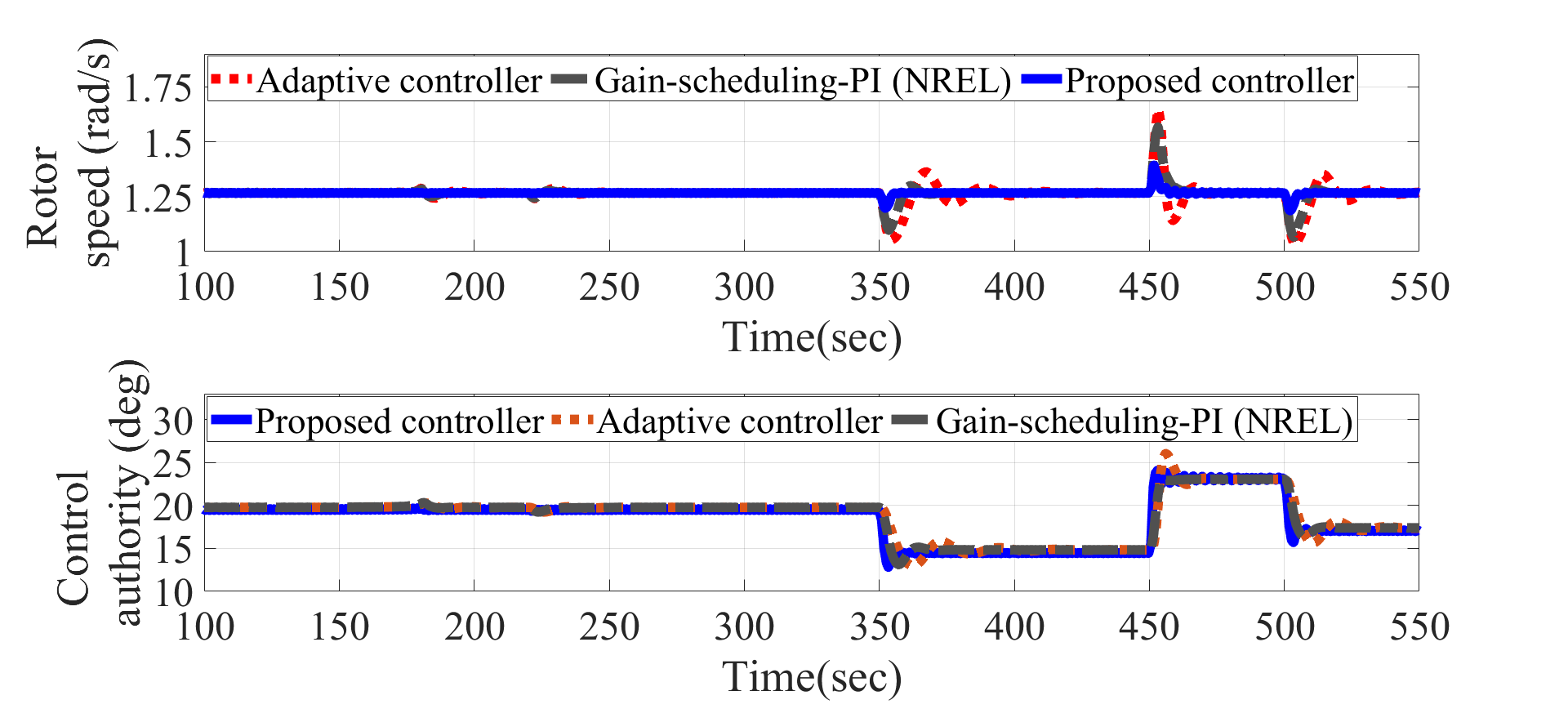}
    \caption{Rotor speed response, and control authority subjected to the step wind signal}
    \label{fig:rs_stp}
\end{figure}
\begin{figure}[t]
    \centering
    \includegraphics[width=8.5cm,height=3cm]{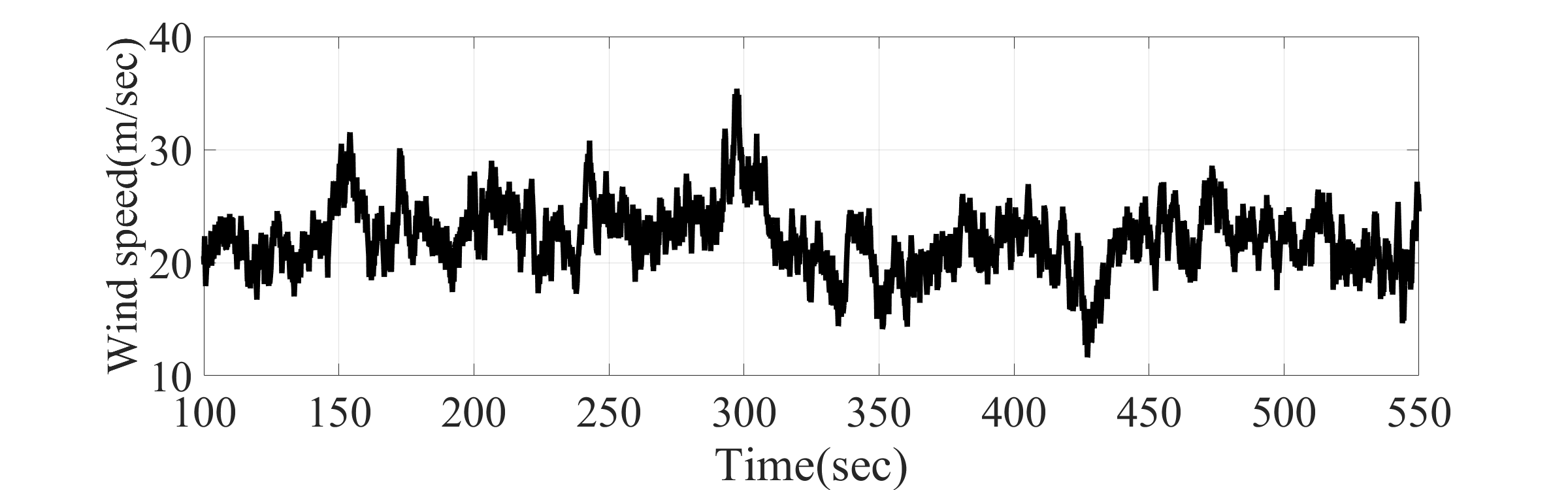}
    \caption{Stochastic wind signal}
    \label{fig:Wind_sth}
\end{figure}
\begin{figure}[t]
    \centering
    \includegraphics[width=8.5cm,height=4.5cm]{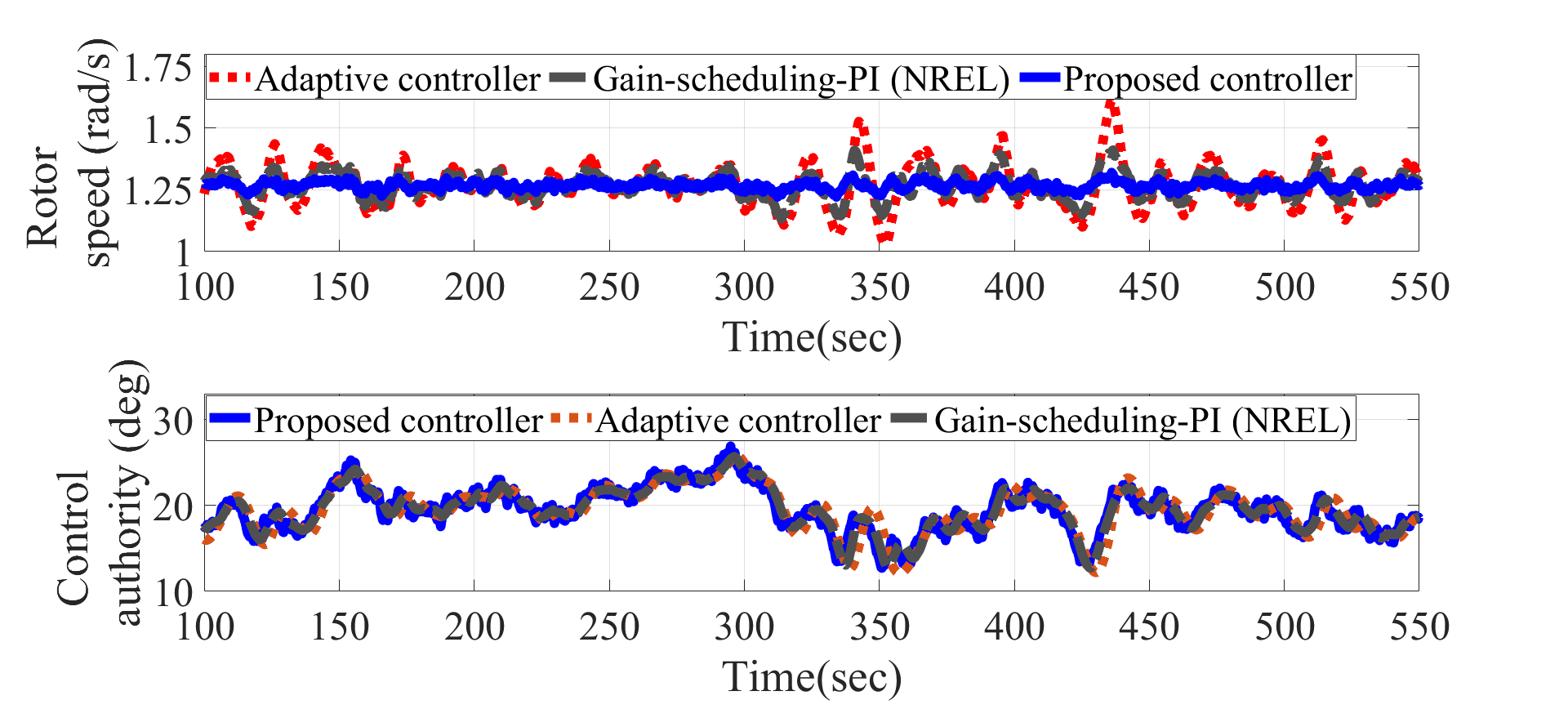}
    \caption{Rotor speed response, and control authority subjected to the stochastic wind signal}
    \label{fig:rs_sth}
\end{figure}
The desired value for the rotor speed is $\omega_{r0}=1.267rad/s$, and the operating points for the pitch angle, and the wind speed are $\theta_0=19.94 deg$, $\nu_0=22m/s$.
 The nominal actuator parameters are $\zeta_0=0.6$, $\omega_{n0}=11.11rad/s$. The parameters $\kappa=7622.7$, $J=43784700 kg.m^2$. {\color{blue}Regarding the parameters $p_1$, $p_2$, and $p_3$ in Eqn.~\eqref{eqn:g1}, and Eqn.~\eqref{eqn:g2}, their values depend on the operating point of the WT. In ~\cite{wasynczuk1981dynamic}, their values are obtained experimentally. Here, their best values at the operating point are obtained via solving an optimization problem, which minimizes the deviation from the experimental values subject to \eqref{eqn:rotor_dyn}:
 \begin{align*}
     &\textsf{Minimize: } \left\|\mathbf{p}-\bar{\mathbf{p}}\right\|^2\\
     &\textsf{Subject to:}\\
     &\left(\frac{\kappa \nu_0^3}{J\omega_{r0}}\left(\frac{\nu_0}{\omega_{r0}}-p_1\right)-\frac{\kappa \nu_0^3}{3J\omega_{r0}}p_3\left\|\boldsymbol{\theta}_0\right\|^2\right)\check{p}_2-\frac{P_0}{J\omega_r}=0,
 \end{align*}
 where $\mathbf{p}=\begin{bmatrix}p_1&\check{p}_2&p_3\end{bmatrix}$, with $\check{p}_2=\textsf{e}^{-p_2\frac{\nu_0}{\omega_{r0}}}$, are the optimization variables, and the vector $\bar{\mathbf{p}}$ contains the corresponding reported experimental values. The solution} gives $p_1=5.4148$, $p_2=0.0682$, and $p_3=0.029$. Since the wind speed is bounded in the full-load region as $11.4 m/s\leq v\leq 25m/s$, and the pitch angle is also bounded as $\theta\in[0^\circ,90^\circ]$, the upper bound parameters are calculated as $\bar{\rho}_{\nu}=1$, $\bar{\rho}_{\omega}=1.5$. Moreover, considering that $\boldsymbol{\rho}_\theta=\frac{-2 \kappa\eta_{\nu}^3 p_3}{3J\eta_{\omega}}\textsf{e}^{\left(-p_2\frac{\eta_\nu}{\eta_\omega}\right)}\boldsymbol{\eta}_\theta$, and choosing $\boldsymbol{\rho}_0=[-1\hspace{2mm}-1\hspace{2mm}-1\hspace{2mm}]^\top$, then $\boldsymbol{\rho}_\theta^\top\boldsymbol{\rho}_0\geq0.15$ is obtained. Consequently, for a given $\gamma=0.25$, and a fixed $\psi=0.5$, and then applying the sufficient condition in Eqn.~\eqref{equ:SC} gives the condition for the control gain as $k\geq 54.17$ in the high-level control loop, which the choice of $k=55$ is chosen. {\color{blue} Regarding ~\eqref{equ:SC}, decreasing $\gamma$ increases the gain $k$, and since $\left\|\widetilde{\omega}_r\right\|\leq\left\|\sigma\right\|\leq\gamma\left\|\widetilde{v}\right\|$, decreasing $\gamma$ attenuates the impact of the wind disturbance on the rotor speed. Hence, increasing $k$ decreases the rotor speed fluctuations against the wind disturbance.} The design parameters for the low-level controller are chosen such that the sufficient condition in Eqn.~\eqref{equ:SC2} is satisfied, then the design parameters are chosen as $\alpha=0.3$, $k_\theta=2.5$. {\color{blue}Note that $k_\theta$ adjusts the speed of the error convergence while $\alpha$ tunes the speed of the parameter estimation.}
 The control structure of the developed controller is illustrated in Fig.~\ref{fig: Block diagram}. 
The proposed controller is compared with the well-tuned PI-gain-scheduling controller (baseline controller) developed by NREL \cite{jonkman2009definition}, and an adaptive controller proposed in \cite{simani2012adaptive}, where in the later an adaptive directional forgetting scheme is utilized to identify the model. Two cases, with deterministic and a stochastic wind signals, are used to show the performance of the proposed control system. The stochastic wind signal has a mean value of $22m/s$, and the turbulence intensity of $20\%$ produced by the TurbSim software,~\cite{jonkman2009turbsim}.
 The fault is ramped up in the time interval $150s-180s$, and then completely activated within $180s-220s$, and ramped down within $220s-250s$. Figure~\ref{fig:Wind_stp} shows the deterministic wind signal applied to the WT. Figure~\ref{fig:rs_stp} shows the rotor speed response subjected to the deterministic wind signal. 
\begin{figure}[t]
    \centering
    \includegraphics[width=8.5cm,height=4cm]{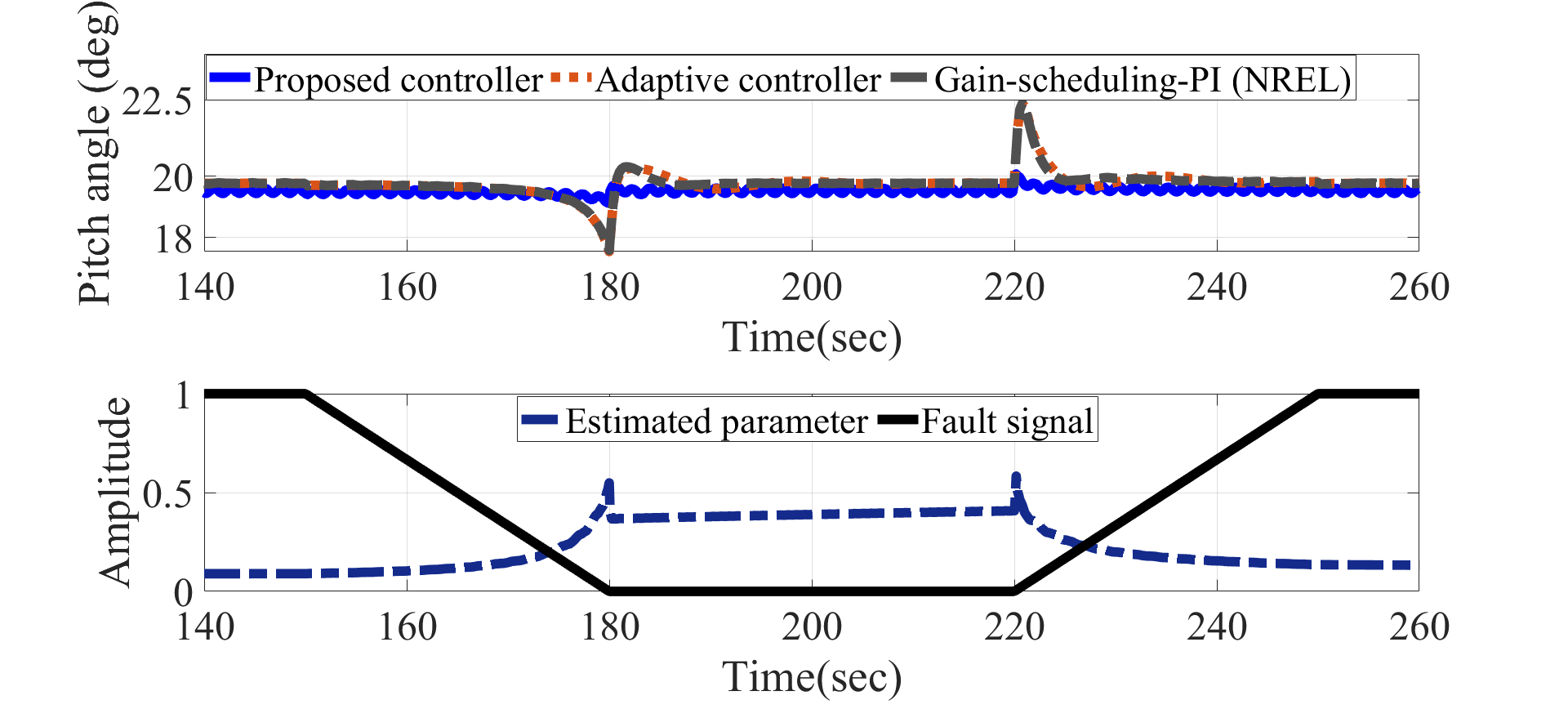}
    \caption{Faulty pitch actuator and the estimated parameter subjected to the step wind signal}
    \label{fig:pitch_stp}
\end{figure}
\begin{figure}[t]
    \centering
    \includegraphics[width=8.5cm,height=4cm]{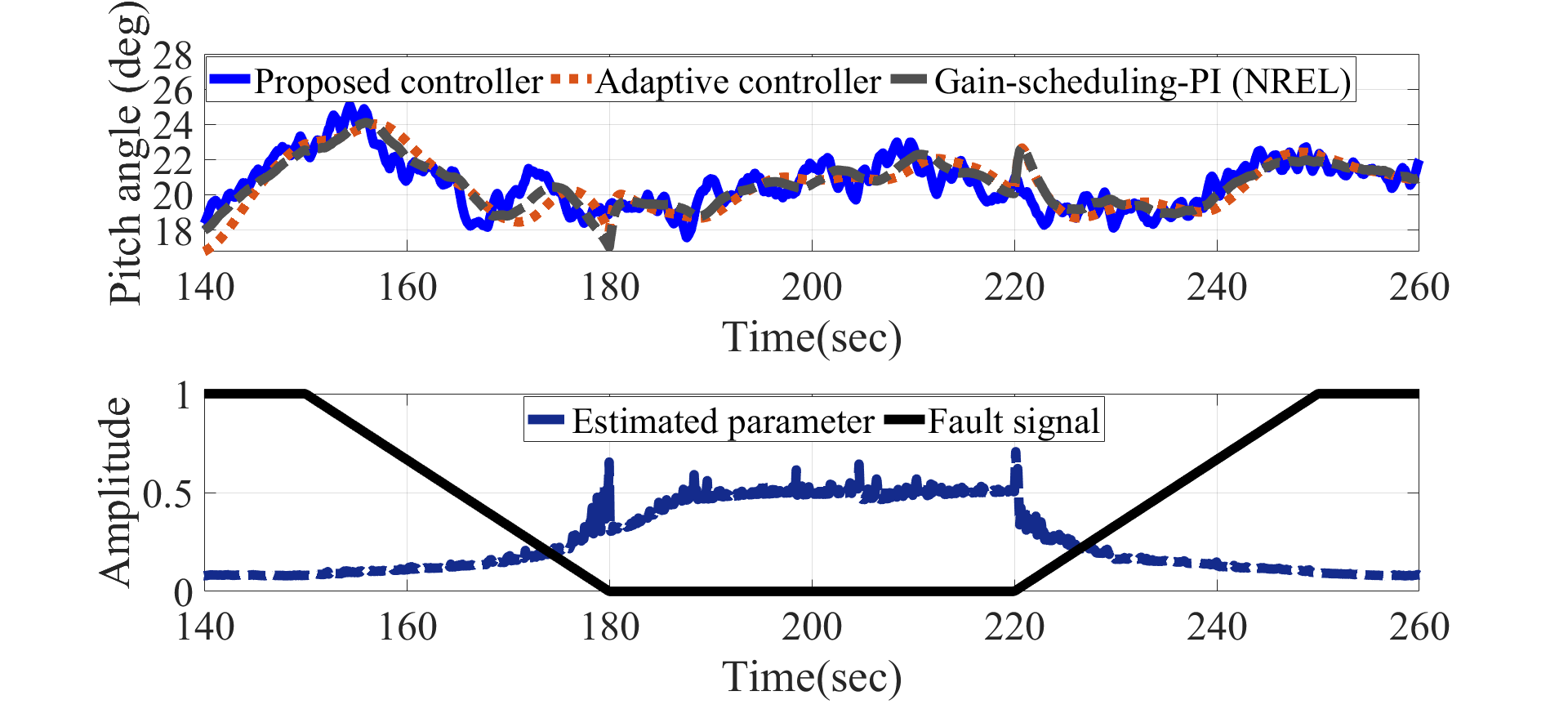}
    \caption{Faulty pitch actuator and the estimated parameter subjected to the stochastic wind signal}
    \label{fig:pitch_sth}
\end{figure}
It clearly illustrates that the proposed controller outperforms others in rejecting the actuator fault, and has significantly less fluctuations in the rotor speed, when the wind speed is changing. Figure~\ref{fig:Wind_sth} shows the stochastic wind signal applied to the WT.
Figure.~\ref{fig:rs_sth} shows the rotor speed response, and the control authority to the stochastic wind signal. It shows that other methods have huge fluctuations between 300s-450s due to a sudden drop in the wind speed. Note that the rotor speed responses are noisy.

Figure.~\ref{fig:pitch_stp} (deterministic), and Fig.~\ref{fig:pitch_sth} (stochastic) illustrate the results of the low-level closed-loop system.
The results show that when the fault happens, the adaptive parameter estimation starts to increase, and when the fault is linearly vanishing, at the same time, the adaptive parameter converges to its faultless value. Hence, the adaptive mechanism rejects this time-varying-incipient fault. Also, Fig.~\ref{fig:pitch_sth} shows that the proposed controller has more pitch activities to deal with the stochastic wind signal. 
Table \ref{tab:rms} shows the root mean square (rms) of the rotor speed error with respect to the PI-gain-scheduling controller. It shows that the developed controller significantly reduces the rotor speed tracking error. Note that the numbers are relative to that of the NREL's baseline controller and does not represent the absolute error for each controller. This was done because the absolute numbers for the proposed controller are really small. Table~\ref{tab:rmsfault} compares the rms of the error of the rotor speed with respect to the fault-free baseline controller during the time that the fault can happen, i.e, between $150sec-250sec$. Table~\ref{tab:rmsfault} shows that the rms of the error for the proposed controller is not only the smallest, but also has no increment when fault occurs compared to other controllers. Figure~\ref{fig:rms chart} compares the rms of the rotor speed error subjected to stochastic wind signals with different mean values, the TI of $20\%$, and $24\times 24$ girds. It shows that the proposed controller attenuates the wind disturbance and has very small increment in the rms of the error when the wind speed increases.  
\begin{table}[h!]
   \caption{Relative Rms of the rotor speed error compared with NREL's baseline controller}
    \label{tab:rms}
    \begin{center}
    \begin{tabular}{c l l}
    \hline
    Wind type&Deterministic&Stochastic\\
 \hline\hline
 Baseline Controller~\cite{jonkman2009definition}&100\%&100\%\\
 Adaptive Controller~\cite{simani2012adaptive}&142.68\%&175.59\%\\
 Proposed Controller &29.85\%&28.97\%\\
 \hline
    \end{tabular}
    \end{center}
\end{table}
\begin{table}[h!]
   \caption{Relative Rms of the rotor speed error compared with the fault-free NREL's baseline controller subjected to stochastic wind signal}
    \label{tab:rmsfault}
    \begin{center}
    \begin{tabular}{c l l}
    \hline
    Fault condition&Fault-free&Faulty\\
 \hline\hline
 Baseline Controller~\cite{jonkman2009definition}&100\%&102.65\%\\
 Adaptive Controller~\cite{simani2012adaptive}&136.34\%&141.91\%\\
 Proposed Controller &34.22\%&34.22\%\\
 \hline
    \end{tabular}
    \end{center}
\end{table}
\begin{figure}[t]
    \centering
    \includegraphics[width=8cm,height=4.5cm]{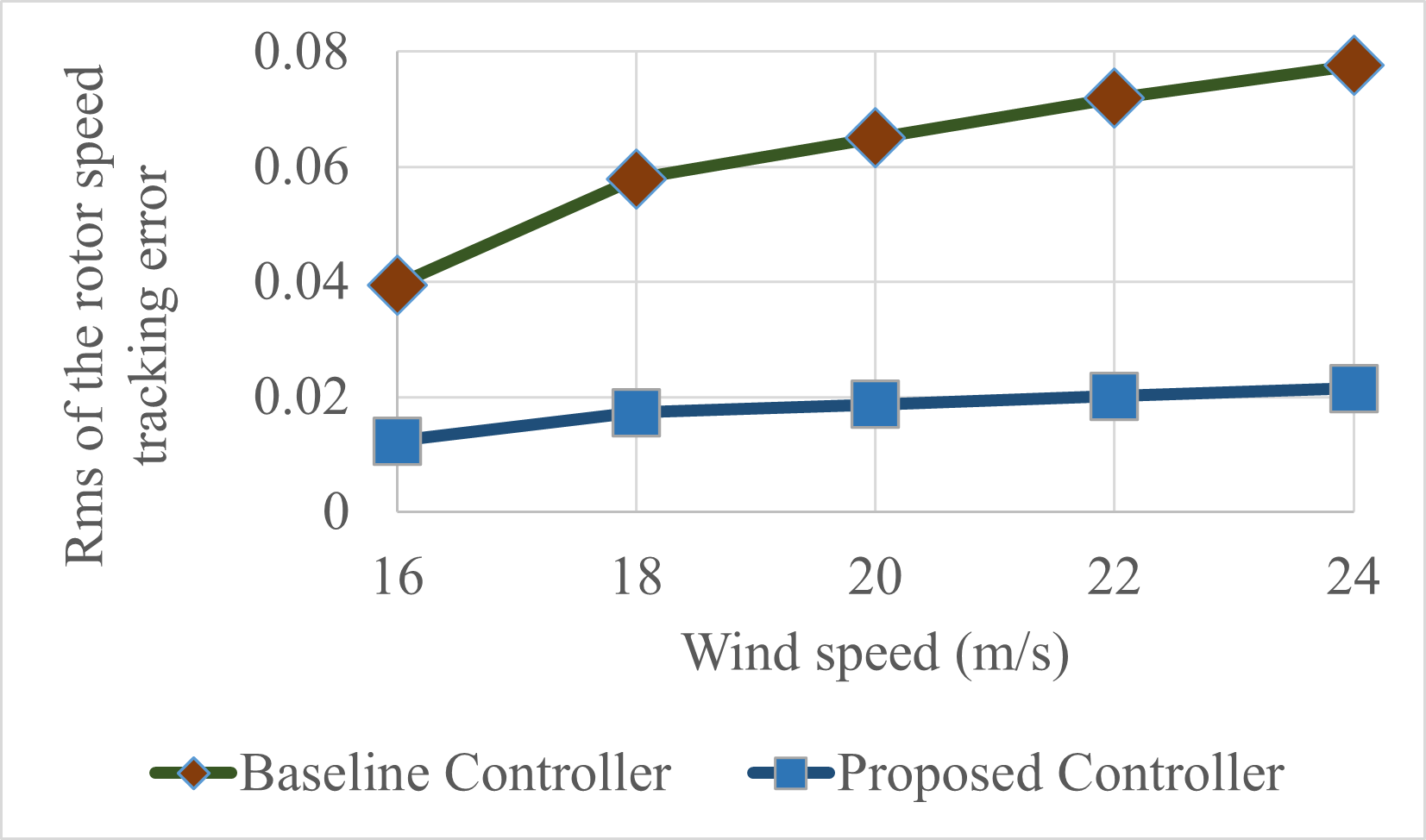}
    \caption{Root means square of the rotor speed tracking error subjected to stochastic wind signals at different operating points}
    \label{fig:rms chart}
\end{figure}

\section{Conclusion}\label{sec:conclusion}
This paper has proposed a nonlinear robust-adaptive controller for a WT with faulty pitch actuators. It was shown that the proposed $\mathcal{L}_2$ controller provides a finite-gain $\mathcal{L}_2$ stable mapping from the wind disturbance to the rotor speed tracking error and the proposed adaptation mechanism ensures a globally asymptotically stable pitch angle error in the low-level layer with time-varying uncertainty in the control input effectiveness. 
Simulation results show a considerable reduction in fluctuations of the output in spite of exposure to an unmeasurable largely fluctuating wind disturbance and time-varying actuator faults. 
\section{Future Work}\label{sec:future work}
While this paper designed an effective robust-adaptive controller, it did not address the problem for individual actuators. Thus, to improve the efficiency of the controller, faulty actuators should be indentified, and then a splitter should be designed to distribute the control authority from the high-level control to the low-level layer based on the degree of the faults. In addition, more aggressive faults, and actuator failure, will be considered.

%

\bibliographystyle{asmems4}
\bibliography{asme2e}

\end{document}